\documentclass[letterpaper, 10 pt, conference]{ieeeconf}  

\IEEEoverridecommandlockouts                              
\usepackage{epsfig} 
\usepackage{amsmath,amssymb,amsfonts}
\usepackage{algpseudocode}
\usepackage{algorithm}
\usepackage{graphicx}
\usepackage{graphics}
\usepackage{textcomp}
\usepackage{xcolor}
\usepackage{subcaption}
\usepackage{cleveref}
\usepackage[font=small,skip=0pt]{caption}

\newtheorem{theorem}{Theorem}
\newtheorem{lemma}{Lemma}

\newtheorem{proposition}{Proposition}    
\newtheorem{assumption}{Assumption}    
\newtheorem{definition}{Definition}

\usepackage{algorithm}
\usepackage{algpseudocode}

\newcommand{\magenta}[1]{{\color{black}#1}}

\newcommand{\revise}[1]{{\color{black}{#1}}}

\title{\LARGE \bf
Anytime-Feasible Gradient Descent for Constrained Optimization Under Gradient Uncertainty
}

\author{Sina Sharifi, Jiarui Wang, and Mahyar Fazlyab
\thanks{The authors are with the Department of Electrical and Computer Engineering at Johns Hopkins University, Baltimore, MD 21218, USA.
{\tt\small \{sshari12, jwang486, mahyarfazlyab\}@jhu.edu}.
This work was supported  by the
National Science Foundation (NSF) under Grant 2515978 and by the
DOE Office of Science (ASCR) under Award No. 826565.
}
}

\begin{document}

\maketitle
\thispagestyle{empty}
\pagestyle{empty}

\begin{abstract}
Constrained optimization is central to many engineering systems in which decisions must satisfy strict safety and operational requirements, especially in real-time settings with limited computational budgets. In such scenarios, optimization algorithms are often terminated before full convergence, making \emph{anytime feasibility} essential for safe deployment. Existing methods that guarantee feasibility at every iterate typically rely on exact gradient information, an assumption that is often violated in practice due to measurement noise, stochastic approximations, or model mismatch. We develop an anytime-feasible first-order method for nonlinear constrained optimization under norm-bounded errors in the objective and constraint gradients. The method computes a robust search direction by solving a second-order cone program and selects a step size through safeguarded backtracking. Assuming exact function evaluations and a strictly feasible
initialization, the method preserves strict feasibility and guarantees sufficient objective decrease whenever the computed search direction is nonzero. We establish a uniform positive lower bound on the accepted step sizes, an $O(1/K)$ bound on the average squared search direction norm, and convergence of the search directions to zero. We also show that a zero search direction at a strictly feasible point certifies approximate first-order stationarity. We validate the proposed method on a multi-agent navigation task in cluttered environments and show that it maintains collision-free trajectories despite noisy gradient information.
\end{abstract}

\section{INTRODUCTION}

Constrained optimization underlies many modern engineering systems, where decisions must be made while satisfying strict operational and safety requirements \cite{erseghe2014distributed, wensing2023optimization}. This is particularly critical in real-time and high-frequency settings, where solutions must be computed rapidly while remaining robust to uncertainty and model inaccuracies. A representative example arises in trajectory optimization for autonomous systems operating in cluttered environments~\cite{williams2016aggressive}, where a robot must compute collision-free trajectories while minimizing energy or travel time using approximate gradient information. Limited computational budgets often require terminating the optimization before convergence. Ensuring anytime feasibility, i.e., satisfying the constraints at every iteration, is therefore essential for safely deploying intermediate solutions.

Motivated by these challenges, a number of methods have been proposed to guarantee anytime feasibility. Most existing approaches, however, rely on access to exact gradients of the objective and constraint functions \cite{auslender2010moving, wang2025anytime}. This assumption is often unrealistic in practice, where gradients are corrupted by noise due to measurement errors, stochastic approximations, or model mismatch.
A complementary line of work studies constrained optimization in the presence of noisy or stochastic gradient information \cite{oztoprak2023constrained, berahas2025line, paquette2020stochastic}. However, these methods do not guarantee anytime feasibility.

In this work, we bridge these two directions by studying constrained optimization problems under noisy gradient information and developing a method that ensures the iterates remain feasible at all times. Specifically, we first formulate the problem as a sequence of quadratically constrained quadratic programs (QCQPs) and show that, when accounting for worst-case noise, each subproblem reduces to a second-order cone program (SOCP). We then demonstrate that solving this SOCP, combined with a line search procedure, yields safe update directions and guarantees convergence at an ergodic rate of $\mathcal{O}(1/K)$ in terms of the average squared search-direction norm.


\subsection{Related Work}
Sequential Quadratic Programming (SQP) is a widely used class of methods for solving nonconvex constrained optimization problems. These methods iteratively approximate the original nonlinear program by solving a sequence of quadratic subproblems, obtained via a second-order expansion of the Lagrangian and linearization of the constraints \cite{gill1986some, mayne2009surperlinearly}. 
Standard SQP iterations do not, in general, preserve feasibility of the original nonlinear constraints, hindering their deployment in online settings where intermediate iterates must remain feasible.
%
Assuming knowledge of the Lipschitz constants of the gradients, \cite{auslender2010moving} constructs conservative local approximations, thereby ensuring anytime feasibility.
%
In contrast, \cite{sharifi2025sequential, wang2025anytime} incorporate a tilting quadratic term into the linearized constraints, yielding a QCQP subproblem, and use a line search procedure to ensure anytime feasibility without requiring prior knowledge of the gradient Lipschitz constants.

The methods discussed so far all assume access to exact gradients when needed. However, a complementary line of work considers the optimization problem in the presence of noise. The authors of  \cite{oztoprak2023constrained} propose a noise-tolerant version of SQP and provide guarantees that the iterates converge to a neighborhood of the solution.
Other methods change the classical Armijo condition and adapt it to the stochastic setup \cite{berahas2025line, paquette2020stochastic}.
In the context of control synthesis for dynamical systems with uncertainty, \cite{long2022safe} uses control barrier functions (CBFs)~\cite{ames2016control} to enforce safety under uncertainty, leading to a reformulation as a second-order cone program (SOCP) that can be solved efficiently online.
In the context of reinforcement learning, \cite{mestres2025off} considers stochastic gradient uncertainty arising from finite-sample estimation and provides probabilistic safety guarantees. In contrast, we consider worst-case (deterministic) gradient perturbations and enforce feasibility for all realizations within a prescribed uncertainty set.

\section{Background and Problem Statement}

\subsection{Problem Statement}
We consider the inequality-constrained optimization problem
\begin{align}\label{eq:optimization_problem}
    f^\star =\min_{x}~ \{  f_0(x) 
    \ \ \text{s.t.}~ f_i(x) \leq 0, \quad i \in [m]\},
\end{align}
where \(f_i:\mathbb{R}^n \to \mathbb{R}\), \(i\in[0,m]:=\{0, \cdots, m\}\), are continuously differentiable.
For brevity, we denote $[1, m]$ by $[m]$.
We assume that the optimal value \(f^\star > -\infty\) is attained at some feasible point \(x^\star\). We denote the feasible set by
\[
\mathcal{F} = \{ x \in \mathbb{R}^n \mid f_i(x) \le 0,\ \forall i \in [m] \}.
\]
We assume that the objective and constraint values can be
evaluated exactly, while their gradients are available through
estimates satisfying
\begin{equation}\label{eq:noisy_grad}
    \nabla f_i(x)=\widehat{\nabla f}_i (x)+d_i,
    \quad \|d_i\|\le\epsilon_i,
    \quad i\in\{0,\ldots,m\},
\end{equation}
where $\epsilon_i>0$ are known error bounds.
The estimates may vary between iterations; dependence on
the current estimates is suppressed in the notation $u(x)$.
We make the following assumptions throughout the paper.
\begin{assumption}[Smoothness]\label{ass:smooth}
For each \(i \in [0,m]\), the function \(f_i\) is continuously differentiable and has \(L_i\)-Lipschitz continuous gradient, i.e.,
\[
\|\nabla f_i(x)-\nabla f_i(y)\| \le L_i \|x-y\|,
\qquad \forall x,y \in \mathbb{R}^n.
\]
\end{assumption}

\begin{assumption}[MFCQ]\label{ass:mfcq}
The Mangasarian--Fromovitz constraint qualification~\cite{nocedal2006numerical} holds at every feasible point \(x \in \mathcal F\); that is, for every \(x \in \mathcal F\), there exists a direction \(d \in \mathbb{R}^n\) such that
\(
\nabla f_i(x)^\top d < 0,
\quad \forall i \in \{i \mid f_i(x) = 0\}.
\)
\end{assumption}

\begin{definition}[$\varepsilon$-KKT point]\label{def:e-kkt}
A point $x \in \mathcal{F}$ is called an $\varepsilon$-KKT point if there exists $\lambda \in \mathbb{R}_+^m$ such that
\begin{align*}
    \big\| \nabla f_0(x) + \sum_{i=1}^m \lambda_i \nabla f_i(x) \big\| &\le \varepsilon, \\
    \lambda_i \geq 0, \ f_i(x) \leq 0, \
    \lambda_i f_i(x) &\geq -\varepsilon, \quad \forall i \in [m].
\end{align*}
\end{definition}

\subsection{Deterministic Safe Sequential QCQP}
To motivate our development, we first recall the deterministic safe gradient method proposed in \cite{wang2025anytime}. At a current feasible iterate \(x\), the search direction \(\tilde{u}(x)\) is obtained as
\begin{align}\label{eq:qcqp}
\tilde{u}(x) \! =\! &\arg\min_{u} \tfrac{1}{2} \|u + \nabla f_0(x)\|^2 \\
&\text{s.t.} ~  \nabla f_i(x)^\top u + \alpha f_i(x) + w_i \|u\|^2 \le 0, ~ \forall i \in [m]. \notag
\end{align}
where $\alpha>0$, $w_i>0$. The objective chooses a search direction that deviates minimally from the unconstrained gradient-descent direction, whereas the constraints enforce barrier-type invariance conditions associated with the feasible set. In particular, the inequality \(\nabla f_i(x)^\top \tilde{u}(x) + \alpha f_i(x)\le 0\) is the nominal continuous-time CBF condition for the constraint \(f_i(x)\le 0\), and the additional quadratic term \(w_i\|\tilde u(x)\|^2\) introduces a robustness margin that accounts for discretization error in the subsequent line-search-based implementation. Specifically, after computing \(\tilde{u}(x)\), the method in \cite{wang2025anytime} employs a backtracking line search to select a step size \(\eta\) and update the iterate as \(x^+ = x + \eta \tilde{u}(x)\). The line search is designed to guarantee two properties simultaneously: i) \emph{anytime feasibility}, meaning that every iterate remains in \(\mathcal F\), and ii) \emph{monotonic descent} of the objective. A key feature of \eqref{eq:qcqp} is the quadratic safeguard term \(w_i\|\tilde u(x)\|^2\), which ensures that the accepted step size is bounded away from zero under suitable smoothness assumptions. This avoids the vanishing-step-size pathology that can arise when discretizing continuous-time QP-based safe-gradient dynamics; see, e.g., \cite{muehlebach2021constraints,allibhoy2023control,sharifi2025safe}.

\begin{figure}[t]
    \centering
    \includegraphics[width=0.99 \columnwidth]{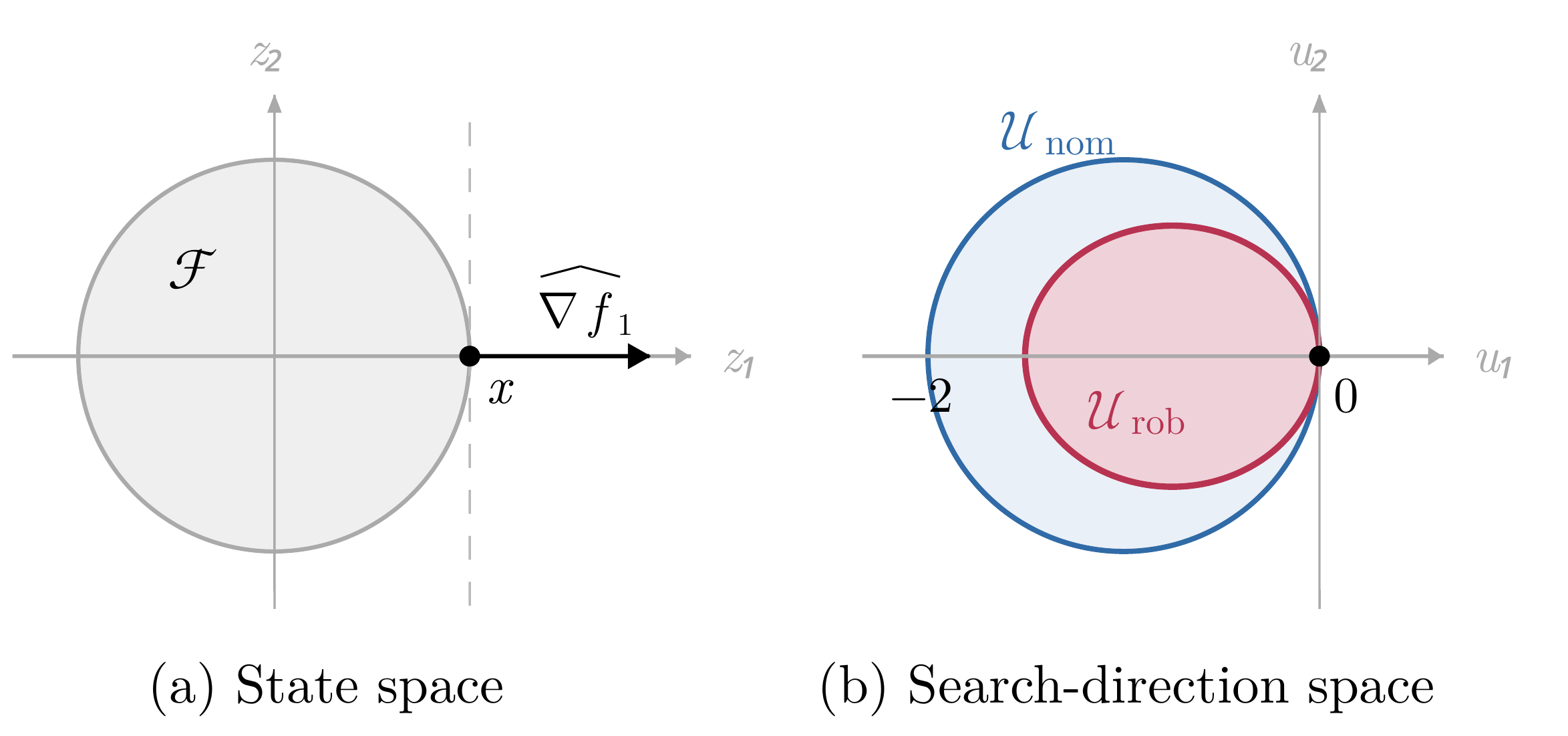}
    \caption{
    Geometry of the nominal and robust search-direction constraints.
(a) Feasible set $\mathcal F$, boundary point $x$, and estimated
gradient direction $\widehat {\nabla f}_1$.
(b) Gradient uncertainty contracts the nominal admissible set
$\mathcal U_{\mathrm{nom}}$ to $\mathcal U_{\mathrm{rob}}$,
excluding directions insufficiently aligned with the inward normal.
    }
    \label{fig:overview}
\end{figure}

\section{Sequential SOCP}

\subsection{Robust Search Direction via SOCP}
We now revisit the deterministic QCQP \eqref{eq:qcqp} with uncertain gradients. Recall that, instead of the exact gradients, we only have access to inexact estimates satisfying \eqref{eq:noisy_grad}. To preserve descent and anytime feasibility under gradient uncertainty, we enforce each constraint robustly over all admissible perturbations.
Specifically, for each \(i \in [m]\), we replace the nominal constraint in \eqref{eq:qcqp} by
\begin{align}
    \sup_{\|d_i\|\le \epsilon_i}
    \Big[(\widehat{\nabla f}_i(x)+d_i)^\top u\Big]
    + \alpha f_i(x) + w_i\|u\|^2 \le 0.
    \label{eq:robust_constraint_sup}
\end{align}
We have $\sup_{\|d_i\|\le \epsilon_i} d_i^\top u = \epsilon_i \|u\|$, 
with maximizer \(d_i^\star = \epsilon_i u/\|u\|\) whenever \(u\neq 0\). Hence, \eqref{eq:robust_constraint_sup} is equivalent to
\begin{align}\label{eq:robust_const}
    \widehat{\nabla f}_i(x)^\top u \!+\! \epsilon_i\|u\| \!+\! \alpha f_i(x) \!+\! w_i\|u\|^2 \le 0,
     \forall i \in [m].
\end{align}
We now turn to the objective. In the deterministic QCQP \eqref{eq:qcqp}, the objective can be written as
\[
\frac{1}{2}\|u+\nabla f_0(x)\|^2
=
\frac{1}{2}\|u\|^2+\nabla f_0(x)^\top u+\frac{1}{2}\|\nabla f_0(x)\|^2.
\]
Since \(x\) is fixed when solving the subproblem, the last term is independent of \(u\) and can be omitted without affecting the minimizer. Motivated by this equivalent reduced form, we robustify the reduced objective model $\frac{1}{2}\|u\|^2+\nabla f_0(x)^\top u$ under the uncertainty model \eqref{eq:noisy_grad}. This leads to the worst-case objective
\begin{align}\label{eq:robust_obj}
    \sup_{\|d_0\|\le \epsilon_0}
    & \left[
        \frac{1}{2}\|u\|^2 + u^\top 
        (\magenta{\widehat{ \nabla f}}_0(x) + d_0)
    \right] \notag \\  
    &\quad =\frac{1}{2}\|u\|^2 + \magenta{\widehat{ \nabla f}}_0(x)^\top u + \epsilon_0\|u\|.
\end{align}
Combining \eqref{eq:robust_const} and \eqref{eq:robust_obj}, we obtain the robust search direction \(u(x)\) as the solution of
\begin{align}\tag{SS-SOCP}
    &u(x) \! = \! \arg\!\min\ 
    \frac{1}{2}\|u\|^2 \! + \! \magenta{\widehat{ \nabla f}}_0(x)^\top u + \epsilon_0\|u\|
    \label{eq:robust_qcqp}\\
    &\mathrm{s.t.}
    \widehat{\nabla f}_i(x)^\top u \!+ \!\epsilon_i\|u\| \!+ \!\alpha f_i(x) \!+\! w_i\|u\|^2 \le 0, \forall i \! \in \! [m].
    \notag
\end{align}
This problem is a robust counterpart of \eqref{eq:qcqp}, where both descent and the forward invariance constraints are enforced against worst-case gradient perturbations.
\Cref{fig:overview} visualizes the differences between the feasible set of \eqref{eq:qcqp} and \eqref{eq:robust_qcqp}.
The resulting problem \eqref{eq:robust_qcqp} 
%
admits an equivalent SOCP representation as follows,
\begin{align} 
\min_{u,r,s} \quad 
& \tfrac{1}{2}s + \magenta{\widehat{ \nabla f}}_0(x)^\top u + \epsilon_0 r \\
\text{s.t.} \quad 
& \widehat{\nabla f}_i(x)^\top u + \epsilon_i r + \alpha f_i(x) + w_i s \leq 0, 
\quad \forall i \in [m], \notag \\
& \|u\| \le r, \quad \|u\|^2 \le s, \notag
\end{align}
An optimal solution can be chosen with \(r \!=\!\|u\|\) and \(s \! =\! r^2 \!= \! \|u\|^2\), so this formulation is equivalent to \eqref{eq:robust_qcqp}. Moreover, \(\|u\|\le r\) is a standard second-order cone constraint, while \(\|u\|^2 \le s\) admits a rotated second-order cone representation. Hence, the problem can be efficiently solved using off-the-shelf SOCP solvers.

In the following result, we show that \eqref{eq:robust_qcqp} yields a descent direction for the objective despite noisy gradient information.
To this end, we first establish strong duality for \eqref{eq:robust_qcqp}.

\smallskip
\begin{lemma}[Slater's Condition]\label{lem:strict_feas}
Suppose \(x\) is strictly feasible for \eqref{eq:optimization_problem}, i.e., $f_i(x) < 0, \ \ \forall i \in [m]$. Then the robust subproblem \eqref{eq:robust_qcqp} is strictly feasible. Therefore, Slater's condition holds for \eqref{eq:robust_qcqp}.
\end{lemma}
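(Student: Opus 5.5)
The natural witness for strict feasibility is the zero direction \(u = 0\). At \(u=0\), every term involving \(u\) in the left-hand side of the robust constraint \eqref{eq:robust_const} vanishes. This holds whether or not the gradient estimates \(\widehat{\nabla f}_i(x)\) are accurate, and also for the nonsmooth term \(\epsilon_i\|u\|\) and the quadratic term \(w_i\|u\|^2\). So for each \(i\in[m]\) the constraint function evaluated at \(u=0\) is simply \(\alpha f_i(x)\). Since \(\alpha>0\) and \(f_i(x)<0\) by hypothesis, this value is strictly negative. Hence \(u=0\) satisfies every constraint of \eqref{eq:robust_qcqp} with strict inequality.

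Next I will make sure this fits the form Slater's condition requires. Each constraint function
\[
g_i(u) := \widehat{\nabla f}_i(x)^\top u + \epsilon_i\|u\| + \alpha f_i(x) + w_i\|u\|^2
\]
is convex in \(u\): it is a sum of an affine function, a nonnegative multiple of a norm, a constant, and a nonnegative multiple of a convex quadratic, using \(\epsilon_i>0\) and \(w_i>0\). The objective \(\tfrac12\|u\|^2+\widehat{\nabla f}_0(x)^\top u+\epsilon_0\|u\|\) is likewise convex, and in fact strongly convex. So \eqref{eq:robust_qcqp} is a convex program with a strictly feasible point, which is exactly Slater's condition. Strong duality and attainment of the dual then follow from standard convex duality. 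For the SOCP reformulation one can use the same witness with \(u=0\) and small positive \(r,s\), for instance \(r=s=\delta\). For small \(\delta>0\), the linear constraints remain strictly satisfied by continuity, and the cone constraints \(\|u\|<r\) and \(\|u\|^2<s\) hold strictly as well.

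There is no real obstacle here. The only point requiring care is to note that the robustification terms \(\epsilon_i\|u\|\) and \(w_i\|u\|^2\) are nonnegative and vanish at \(u=0\). The uncertainty therefore cannot destroy strict feasibility at the origin, even though it shrinks the admissible set, as illustrated in \Cref{fig:overview}.
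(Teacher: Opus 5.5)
Your proposal is correct and matches the paper's proof: $u=0$ is the Slater point, since each robust constraint reduces there to $\alpha f_i(x)<0$. Your convexity check and the strictly feasible point $(u,r,s)=(0,\delta,\delta)$ for the SOCP reformulation are also correct; the paper leaves these implicit.
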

\begin{proof}
Setting $u = 0$ yields $\alpha f_i(x) < 0$ for all $i \in [m]$, hence all constraints are strictly satisfied.
\end{proof}

\smallskip
\begin{theorem}[Robustness of Descent Direction]\label{thm:descent}
    Let \(x\) be a strictly feasible point for \eqref{eq:optimization_problem}, and let \(u(x)\) be the optimizer of \eqref{eq:robust_qcqp}. Then
\(
\nabla f_0(x)^\top u(x) \leq -\|u(x)\|^2.
\)    
\end{theorem}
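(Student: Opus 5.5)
Since $x$ is strictly feasible, Lemma~\ref{lem:strict_feas} gives Slater's condition for \eqref{eq:robust_qcqp}. The problem is convex: the objective is strongly convex, and each constraint is a sum of convex functions. Hence strong duality holds and KKT multipliers $\lambda_i\ge 0$ exist at the unique optimizer $u=u(x)$. If $u=0$, the claim is trivial, so I will assume $u\neq 0$. Then every norm term is differentiable at $u$, with $\nabla\|u\|=u/\|u\|$.

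The first step is to write stationarity:
\begin{align*}
u+\widehat{\nabla f}_0(x)+\epsilon_0\tfrac{u}{\|u\|}+\sum_{i=1}^m\lambda_i\Big(\widehat{\nabla f}_i(x)+\epsilon_i\tfrac{u}{\|u\|}+2w_iu\Big)=0 .
\end{align*}
Taking the inner product with $u$ gives
\begin{align*}
\widehat{\nabla f}_0(x)^\top u=-\|u\|^2-\epsilon_0\|u\|-\sum_{i=1}^m\lambda_i\big(\widehat{\nabla f}_i(x)^\top u+\epsilon_i\|u\|+2w_i\|u\|^2\big).
\end{align*}

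The second step is to control the multiplier sum using complementary slackness. For each active $i$, we have $\widehat{\nabla f}_i(x)^\top u+\epsilon_i\|u\|+w_i\|u\|^2=-\alpha f_i(x)$. The summand therefore equals $-\alpha f_i(x)+w_i\|u\|^2$, which is positive because $f_i(x)<0$. Hence the sum is nonnegative, and
\begin{align*}
\widehat{\nabla f}_0(x)^\top u\le -\|u\|^2-\epsilon_0\|u\|.
\end{align*}

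The final step is to pass to the true gradient using \eqref{eq:noisy_grad}:
\begin{align*}
\nabla f_0(x)^\top u=\widehat{\nabla f}_0(x)^\top u+d_0^\top u\le \widehat{\nabla f}_0(x)^\top u+\epsilon_0\|u\|\le -\|u\|^2 .
\end{align*}

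The only delicate point is the nondifferentiability of $\|u\|$ at zero, which the case split $u=0$ handles. One could alternatively argue with subgradients, but this is unnecessary. The key observation that makes the argument work is that the robust term $\epsilon_0\|u\|$ in the objective exactly cancels the worst-case perturbation $d_0^\top u$.
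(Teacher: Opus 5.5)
Your proposal is correct and follows the same argument as the paper. Both use Slater via Lemma~\ref{lem:strict_feas} to obtain KKT multipliers, dispose of the case $u(x)=0$ as trivial, take the inner product of the stationarity equation with $u(x)\neq 0$, use complementary slackness to show the multiplier sum is nonnegative, and then absorb $d_0^\top u(x)$ into the $\epsilon_0\|u(x)\|$ term.
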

\begin{proof}
    The proof can be found in \Cref{proof:descent}
\end{proof}



\subsection{Sequential SOCP with Safeguarded Backtracking}

We now use the search direction obtained by solving \eqref{eq:robust_qcqp} to generate the iterates
\begin{align}\label{eq:update}
x^{k+1} = x^k + t^k u(x^k),
\end{align}
where $t^k>0$ denotes the step size. To select $t^k$ adaptively, we employ a safeguarded backtracking line search.  
Specifically, given fixed parameters $\gamma_1, \gamma_2\in(0,1)$, we choose $t^k$ such that
\begin{align}
f_0(x^k + t^k u(x^k)) &\le f_0(x^k) - \gamma_1 t^k \|u(x^k)\|^2, \label{eq:armijo} \\
f_i(x^k + t^k u(x^k)) &\le (1 - \gamma_2) f_i(x^k), \qquad \forall i \in [m]. \label{eq:feas_ls}
\end{align}
Condition \eqref{eq:armijo} imposes an Armijo-type sufficient decrease on the objective, while \eqref{eq:feas_ls} permits the constraint to approach its boundary while remaining strictly satisfied. In \Cref{prop:stepsize} we prove the existence of such a step size $t^k > 0$.

The next result shows that if the subproblem \eqref{eq:robust_qcqp} returns the zero direction at a strictly feasible point, then that point satisfies the $\varepsilon$-KKT conditions of the original problem. Hence, the condition $u(x)\!=\!0$ serves as a natural first-order stationarity certificate for the proposed method.

\smallskip
\begin{theorem}[$\varepsilon$-KKT conditions]
\label{thm:e-kkt}
Assume that $x$ is strictly feasible for \eqref{eq:optimization_problem}, i.e., $f_i(x)<0, \quad \forall i\in[m]$. 
Let $u(x)$ be an optimal solution of \eqref{eq:robust_qcqp}. If $u(x)=0$, then $x$ is an $\varepsilon$-KKT point of \eqref{eq:optimization_problem} with $\varepsilon = 2\epsilon_0$. 
\end{theorem}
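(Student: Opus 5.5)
The plan is to take the trivial multiplier $\lambda=0$ and show that $u(x)=0$ being optimal forces the gradient estimate to be small: specifically $\|\widehat{\nabla f}_0(x)\|\le\epsilon_0$. The triangle inequality with \eqref{eq:noisy_grad} then gives
\[
\|\nabla f_0(x)\|\le \|\widehat{\nabla f}_0(x)\|+\|d_0\|\le 2\epsilon_0 .
\]
With $\lambda=0$, the stationarity condition in Definition~\ref{def:e-kkt} reduces to $\|\nabla f_0(x)\|\le 2\epsilon_0$. Complementarity holds trivially since $\lambda_i f_i(x)=0\ge-\varepsilon$, and $f_i(x)<0$ gives primal feasibility. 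So everything reduces to the bound on $\|\widehat{\nabla f}_0(x)\|$.

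The key observation is that at $u=0$ every constraint of \eqref{eq:robust_qcqp} is strictly inactive, because its value there is $\alpha f_i(x)<0$. By continuity of the constraint functions in $u$, there is a ball around $u=0$ on which all constraints remain satisfied. Hence $u=0$ is a local minimizer of the unconstrained objective $\phi(u)=\tfrac12\|u\|^2+\widehat{\nabla f}_0(x)^\top u+\epsilon_0\|u\|$ over that ball.

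To extract the bound while avoiding the nondifferentiability of $\|u\|$ at the origin, I will use a directional argument. Suppose $g:=\widehat{\nabla f}_0(x)\neq 0$ and test $u=-t g$ for small $t>0$, which is feasible by the previous paragraph. Then
\[
\phi(-tg)=\tfrac{t^2}{2}\|g\|^2 - t\|g\|^2+\epsilon_0 t\|g\| .
\]
Optimality of $u=0$ gives $\phi(-tg)\ge\phi(0)=0$. Dividing by $t\|g\|$ and letting $t\downarrow 0$ yields $\|g\|\le\epsilon_0$. If $g=0$, the bound is immediate.

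Equivalently, one could invoke strong duality via Lemma~\ref{lem:strict_feas} and write the KKT conditions of \eqref{eq:robust_qcqp} with subdifferentials. Complementary slackness with $\alpha f_i(x)<0$ kills all constraint multipliers, and $0\in g+\epsilon_0\mathbb{B}$ gives the same conclusion. The only delicate point is this nonsmoothness of the norm terms at $u=0$, and the directional test above handles it without any subgradient calculus. The factor $2$ in $\varepsilon=2\epsilon_0$ arises exactly from the two $\epsilon_0$ contributions: the robustification term in the objective and the actual gradient error $d_0$.
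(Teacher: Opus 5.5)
Your proof is correct and shares the paper's skeleton. Every constraint of \eqref{eq:robust_qcqp} has value $\alpha f_i(x)<0$ at $u=0$, so one takes $\lambda=0$, shows $\|\widehat{\nabla f}_0(x)\|\le\epsilon_0$, and finishes with the triangle inequality against $d_0$.

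The difference is the middle step. The paper invokes Slater's condition (\Cref{lem:strict_feas}) to get SOCP multipliers, uses complementary slackness to force $\lambda_i(x)=0$, and reads off $\widehat{\nabla f}_0(x)=-\epsilon_0 q$ with $\|q\|\le 1$ from the subdifferential of the norm at the origin. Your directional test along $-\widehat{\nabla f}_0(x)$ uses only that $u=0$ is optimal over a feasible set containing a neighborhood of the origin. It therefore needs neither strong duality nor subgradient calculus, and $\lambda=0$ is chosen rather than derived.

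The paper's multiplier-based route, in turn, reuses the KKT system already set up in the appendix. It would also be the natural starting point for a certificate at the algorithm's actual stopping test $\|u(x^k)\|\le\epsilon_{tol}$. There, constraints of the subproblem may be active and multipliers nonzero, and your neighborhood argument, which relies on $u=0$ being interior to the feasible set, no longer applies directly.
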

\begin{proof}
    The proof can be found in  \Cref{proof:e-kkt}.
\end{proof}

\smallskip
\begin{proposition}[Lower bound on step size]\label{prop:stepsize}
Let \Cref{ass:smooth} hold, and suppose $x^k$ is strictly feasible for \eqref{eq:optimization_problem}, then condition \eqref{eq:armijo} and \eqref{eq:feas_ls} hold for all $t \in [0, t_{\min}]$ where
\begin{align}
t_{\min} \! = \beta \! \min\left\{ \frac{2(1-\gamma_1)}{L_0}, \; \min_i\left(\frac{2w_i}{L_{i}}\right), \; \frac{\gamma_2}{\alpha}, \; t_{\mathrm{init}} \right\}, \notag
\end{align}
and $t_{\mathrm{init}}$ is the initial step size in \Cref{alg:method}.
Consequently, the safeguarded backtracking line search returns $t^k \ge t_{\min}$.
\end{proposition}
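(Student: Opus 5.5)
The plan is to verify the two line-search conditions separately for every $t$ in the interval
\[
\Big[0,\ \min\big\{ \tfrac{2(1-\gamma_1)}{L_0},\ \min_i \tfrac{2w_i}{L_i},\ \tfrac{\gamma_2}{\alpha}\big\}\Big],
\]
using the descent lemma implied by \Cref{ass:smooth}. This interval contains $[0,t_{\min}]$ because $\beta<1$. Afterwards, a standard backtracking argument will give $t^k\ge t_{\min}$. Write $u=u(x^k)$. If $u=0$, both conditions hold trivially: \eqref{eq:armijo} holds with equality, and \eqref{eq:feas_ls} holds since $f_i(x^k)\le(1-\gamma_2)f_i(x^k)$ for $f_i(x^k)<0$. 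So assume $u\neq 0$.

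\emph{Objective.} By \Cref{ass:smooth}, $f_0(x^k+tu)\le f_0(x^k)+t\nabla f_0(x^k)^\top u+\tfrac{L_0t^2}{2}\|u\|^2$. Invoking \Cref{thm:descent}, which applies because $x^k$ is strictly feasible, we have $\nabla f_0(x^k)^\top u\le -\|u\|^2$. Hence
\[
f_0(x^k+tu)\le f_0(x^k)-t\big(1-\tfrac{L_0t}{2}\big)\|u\|^2 .
\]
This is at most $f_0(x^k)-\gamma_1 t\|u\|^2$ whenever $t\le 2(1-\gamma_1)/L_0$, which establishes \eqref{eq:armijo}.

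\emph{Constraints.} This is the step where the uncertainty must be handled. The descent lemma involves the \emph{true} gradient $\nabla f_i(x^k)$, whereas $u$ only satisfies the constraint written with the estimate. The fix is to use \eqref{eq:noisy_grad} together with Cauchy--Schwarz:
\[
\nabla f_i(x^k)^\top u=\widehat{\nabla f}_i(x^k)^\top u+d_i^\top u\le \widehat{\nabla f}_i(x^k)^\top u+\epsilon_i\|u\|.
\]
Feasibility of $u$ in \eqref{eq:robust_qcqp} then gives $\nabla f_i(x^k)^\top u\le -\alpha f_i(x^k)-w_i\|u\|^2$. Substituting into the descent lemma for $f_i$ yields
\[
f_i(x^k+tu)\le (1-t\alpha)f_i(x^k)-t\big(w_i-\tfrac{L_it}{2}\big)\|u\|^2\le (1-t\alpha)f_i(x^k)
\]
for $t\le 2w_i/L_i$. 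Since $f_i(x^k)<0$, the inequality $(1-t\alpha)f_i(x^k)\le(1-\gamma_2)f_i(x^k)$ is equivalent to $t\alpha\le\gamma_2$, that is, $t\le\gamma_2/\alpha$. This establishes \eqref{eq:feas_ls}, and in particular $x^{k+1}$ stays strictly feasible, so the argument can be repeated at the next iterate.

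\emph{Backtracking.} The line search in \Cref{alg:method} tries $t_{\mathrm{init}},\beta t_{\mathrm{init}},\beta^2t_{\mathrm{init}},\dots$ and stops at the first trial satisfying both conditions. There are two cases.
\begin{itemize}
\item If $t_{\mathrm{init}}$ is accepted, then $t^k=t_{\mathrm{init}}\ge t_{\min}$.
\item Otherwise, the previous trial $t^k/\beta$ was rejected. By the two steps above, every $t$ not exceeding the bracketed minimum is accepted, so $t^k/\beta$ must exceed $\min\{2(1-\gamma_1)/L_0,\min_i 2w_i/L_i,\gamma_2/\alpha\}$. Multiplying by $\beta$ gives $t^k\ge t_{\min}$.
\end{itemize}
The only delicate point is the constraint step: the robust term $\epsilon_i\|u\|$ in the SOCP constraint is exactly what absorbs the worst-case gradient error $d_i^\top u$ in the true-gradient Taylor bound.
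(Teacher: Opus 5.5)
Your proposal is correct and follows essentially the same route as the paper: the descent lemma combined with Theorem~\ref{thm:descent} gives the Armijo threshold $2(1-\gamma_1)/L_0$, and the descent lemma combined with the robust SOCP constraint (with $\epsilon_i\|u\|$ absorbing $d_i^\top u$) gives the thresholds $2w_i/L_i$ and $\gamma_2/\alpha$. Your explicit backtracking case analysis spells out a step the paper only asserts; the one thing left implicit there is termination, which holds because $\beta^j t_{\mathrm{init}}\to 0$ eventually falls below the bracketed minimum and is then accepted.
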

\begin{proof}
    The proof can be found in \Cref{proof:stepsize}.
\end{proof}


\begin{theorem}[Strict feasibility and convergence]
\label{thm:convergence}
Suppose Assumption~1 and the gradient uncertainty model~(2)
hold. Let $\{x^k\}_{k\ge0}$ be generated by Algorithm~1  from a
strictly feasible point $x^0$. Assume
$\alpha>0$, $w_i>0$ for all $i\in[m]$,
$\beta,\gamma_1,\gamma_2\in(0,1)$. Then every iterate is strictly feasible, and
\begin{equation}\label{eq:strict_feasibility_bound}
    f_i(x^K)
    \le (1-\gamma_2)^K f_i(x^0)<0,
    \qquad i\in[m],\ K\ge0.
\end{equation}
Moreover, for every $K\ge1$,
\begin{equation}\label{eq:ergodic_direction_bound}
\begin{aligned}
    \frac{1}{K}\sum_{k=0}^{K-1}\|u(x^k)\|^2
    &\le
    \frac{f_0(x^0)-f^\star}
         {\gamma_1t_{\min}K}.
\end{aligned}
\end{equation}
where \(t_{\min}\) is defined in Proposition 1. In particular,
$\sum_{k=0}^{\infty}\|u(x^k)\|^2
    \le
    \frac{f_0(x^0)-f^\star}{\gamma_1t_{\min}}
    <\infty,$
and hence
\begin{equation}\label{eq:direction_convergence}
    \lim_{k\to\infty}\|u(x^k)\|=0.
\end{equation}
\end{theorem}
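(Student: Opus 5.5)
The argument is an induction on $k$ that combines Proposition~\ref{prop:stepsize} with the two line-search conditions \eqref{eq:armijo} and \eqref{eq:feas_ls}, followed by a telescoping sum.

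\textbf{Feasibility.} Assume $x^k$ is strictly feasible, i.e. $f_i(x^k)<0$ for all $i\in[m]$. By Lemma~\ref{lem:strict_feas}, \eqref{eq:robust_qcqp} satisfies Slater's condition, so $u(x^k)$ is well defined. By Proposition~\ref{prop:stepsize}, the safeguarded backtracking terminates and returns some $t^k\ge t_{\min}>0$ at which both \eqref{eq:armijo} and \eqref{eq:feas_ls} hold. Condition \eqref{eq:feas_ls} then gives $f_i(x^{k+1})\le(1-\gamma_2)f_i(x^k)<0$, since $1-\gamma_2>0$ and $f_i(x^k)<0$. Thus $x^{k+1}$ is strictly feasible. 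Unrolling this inequality from $k=K-1$ down to $k=0$ gives \eqref{eq:strict_feasibility_bound}, and the base case is the strictly feasible initialization $x^0$.

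Two edge cases need a word. If $u(x^k)=0$, then $x^{k+1}=x^k$; the contraction \eqref{eq:feas_ls} then fails literally, because $f_i(x^k)\le(1-\gamma_2)f_i(x^k)$ would require $f_i(x^k)\ge 0$. I would therefore argue that in this case the algorithm terminates at an $\varepsilon$-KKT point by Theorem~\ref{thm:e-kkt}, so the bound is only claimed while the iteration proceeds. Alternatively, I would appeal to Proposition~\ref{prop:stepsize}, which states that \eqref{eq:feas_ls} holds for all $t\in[0,t_{\min}]$; for $t>0$ its proof must use the $\alpha f_i$ term, via the constraint $\widehat{\nabla f}_i^\top u+\epsilon_i\|u\|+\alpha f_i+w_i\|u\|^2\le 0$. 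I take Proposition~\ref{prop:stepsize} as given. The other point is that the uncertainty model \eqref{eq:noisy_grad} is needed only through Theorem~\ref{thm:descent} and Proposition~\ref{prop:stepsize}, which already use it.

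\textbf{Rate.} By \eqref{eq:armijo} and $t^k\ge t_{\min}$,
\[
f_0(x^{k+1})\le f_0(x^k)-\gamma_1 t_{\min}\|u(x^k)\|^2 .
\]
Summing over $k=0,\dots,K-1$ telescopes to
\[
\gamma_1 t_{\min}\sum_{k=0}^{K-1}\|u(x^k)\|^2\le f_0(x^0)-f_0(x^K).
\]
Each $x^K$ is feasible, so $f_0(x^K)\ge f^\star$. Dividing by $\gamma_1 t_{\min}K$ yields \eqref{eq:ergodic_direction_bound}. The right-hand side of the partial-sum bound does not depend on $K$, so letting $K\to\infty$ gives a finite series. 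The terms of a convergent series tend to zero, which gives \eqref{eq:direction_convergence}.

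\textbf{Main obstacle.} The delicate step is the uniform lower bound $t^k\ge t_{\min}$, which is independent of $k$. Everything else is bookkeeping once Proposition~\ref{prop:stepsize} supplies it. Here that bound is inherited directly, because $t_{\min}$ depends only on $L_i$, $w_i$, $\alpha$, $\gamma_1$, $\gamma_2$, $\beta$ and $t_{\mathrm{init}}$, and not on the iterate or on the gradient estimates.
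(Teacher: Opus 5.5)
Your argument is correct and essentially matches the paper's proof: induction using Proposition~\ref{prop:stepsize} and \eqref{eq:feas_ls} for strict feasibility, then telescoping \eqref{eq:armijo} with $t^k\ge t_{\min}$ and $f_0(x^K)\ge f^\star$. One correction to your aside: when $u(x^k)=0$, condition \eqref{eq:feas_ls} does not fail, because $f_i(x^k)\le(1-\gamma_2)f_i(x^k)$ is equivalent to $\gamma_2 f_i(x^k)\le 0$, which holds since $f_i(x^k)<0$, so that case needs no special treatment.
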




\begin{algorithm}[t]
\caption{Safe Sequential SOCP with Line Search}
\label{alg:method}
\begin{algorithmic}[1]
    \Require Initialization $x^0 \in \mathbb{R}^{n}$, maximum iteration $K > 0$, stopping tolerance $\epsilon_{tol}>0$, line search parameters $\gamma_1, \gamma_2,\beta \in (0,1)$, $\alpha>0$, $w_i>0$, gradient-error bound \(\epsilon_i>0\), and initial step size $t_{\mathrm{init}} > 0$
    
    \State Ensure $f_i(x^0) < 0, \ \ i \in [m]$

    \For{\revise{$k = 0,1,2,...,K-1$}}
        \State Compute $u(x^k)$ according to \eqref{eq:robust_qcqp}
        \revise{
        \If {$\|u(x^k)\|_2 \leq \epsilon_{tol}$}
            \Return $x^k$
        \EndIf
        }
        \State $t \gets t_{\mathrm{init}}$
        
        \While{either \eqref{eq:armijo} or \eqref{eq:feas_ls} fails}
            \State $t \gets \beta t$
        \EndWhile
        
        \State $t^k \gets t, \quad x^{k+1} \gets x^k + t^k u(x^k)$ 
    \EndFor
    
    \State \textbf{Return} $x^{K}$
\end{algorithmic}
\end{algorithm}

\section{Experiments}
In this section, we evaluate the proposed method on a multi-agent optimal control task involving collision-free navigation. The objective is to steer $n_v$ vehicles from their initial configurations to designated goal states while avoiding both static obstacles and inter-agent collisions. The experimental setup follows \cite{wu2025accelerated, wang2025anytime}. Formally, the problem is
\begin{align} \label{eq:MPC_experiment}
\min_{X, U} ~ & \sum_{t=0}^{N-1} l(X(t), U(t)) + V(X(N)) \\
\text{s.t.} ~
& U_i(t) \in \mathcal{U}, ~ \forall t \in [0, N-1], \ i \in [n_v], \notag \\
& X_i(t) \in \mathcal{X}, ~ \forall t \in [0, N], \ i \in [n_v], \notag \\
& X_i(t+1) \! = \! f(X_i(t), U_i(t)), ~ \forall t \in [0, N-1], i \in [n_v], \notag \\
& X_i(0) = X_i^s, ~\forall i \in [n_v], \notag \\
& g_j(X_i(t)) \! \leq \! 0, ~\forall i \in [n_v], j \in [3], t \in [0, N], \notag \\
&\|p_i(t)-p_j(t)\|_2^2 \ge 1,
\quad 1\le i<j\le n_v,\quad t\in [0, N]. \notag
\end{align}
Each vehicle follows Dubins' car dynamics:
\magenta{
\begin{align*}
f(X_i(t), U_i(t)) =
\begin{bmatrix}
    x_i(t) + v_i(t)\Delta t\cos\theta_i(t) - \Delta x \Delta t \\
    y_i(t) + v_i(t)\Delta t\sin\theta_i(t) \\
    \theta_i(t) + w_i(t)\Delta t
\end{bmatrix}.
\end{align*}
where $\Delta t$ is the discretization step size, $X_i(t) = [x_i(t), y_i(t), \theta_i(t)]^\top$ consists of $xy$ coordinate and orientation, and $U_i(t) = [v_i(t), w_i(t)]^\top$ consists linear and angular velocity. The constant $\Delta x=1$ represents
a drift velocity in the negative $x$-direction. We write $p_i(t)=[x_i(t),y_i(t)]^\top$.}
%
%
\magenta{
The desired steady-state input is $U_i^d = [1,\,0]^\top$ for all $i \in [n_v]$. The stage cost is
\[
    l(X(t), U(t)) \! = \!
    \sum_{i \in [n_v]} 
        \big(
            \|X_i(t) - X_i^d\|_2^2 
            + 0.01\,\|U_i(t) - U_i^d\|_2^2
        \big),
\]
and the terminal cost is
\(
    V(X(N)) 
    = \sum_{i \in [n_v]} 
        \|X_i(N) - X_i^d\|_{P_i}^2,
\)
where each $P_i$ is obtained from the discrete-time algebraic Riccati Equation 
\begin{small}
\begin{align*}
    A_i^\top P_i A_i - P_i - (A_i^\top P_i B_i)(R + B_i^\top P_i B_i)^{-1}(B_i^\top P_i A_i) + Q \! = \! 0,
\end{align*}
\end{small}
with $A_i = \nabla_1 f(X_i^d, U_i^d)$, $B_i = \nabla_2 f(X_i^d, U_i^d)$, $Q=I$ and $R=0.01 I$. The state and input constraints are defined as
\[
\begin{aligned}
\mathcal{X} &= 
\{(x, y, \theta) : |x| \le 3.2, |y| \le 3.2,~ |\theta| \le \pi\},\\
\mathcal{U} &= 
\{(v, w) : -5 \le v \le 12,~  |w| \le \tfrac{3}{2}\pi\}.
\end{aligned}
\]
Obstacle avoidance is enforced through the constraint
\(
    g_j(X_i(t)) = r_j^2 - \|p_i(t) - c_j\|_2^2 \le 0,
\)
where obstacle centers are $c_1 = [-1, -1]^\top, c_2 = [0,1]^\top, c_3 = [1,0]^\top$ and obstacles radii are $r_1=1, r_2=0.5, r_3=0.5$.
We set the planning horizon to $N = 30$ and $\Delta t = 0.03$.
}

\begin{figure}[t]
    \centering
    \includegraphics[width=0.49\linewidth]{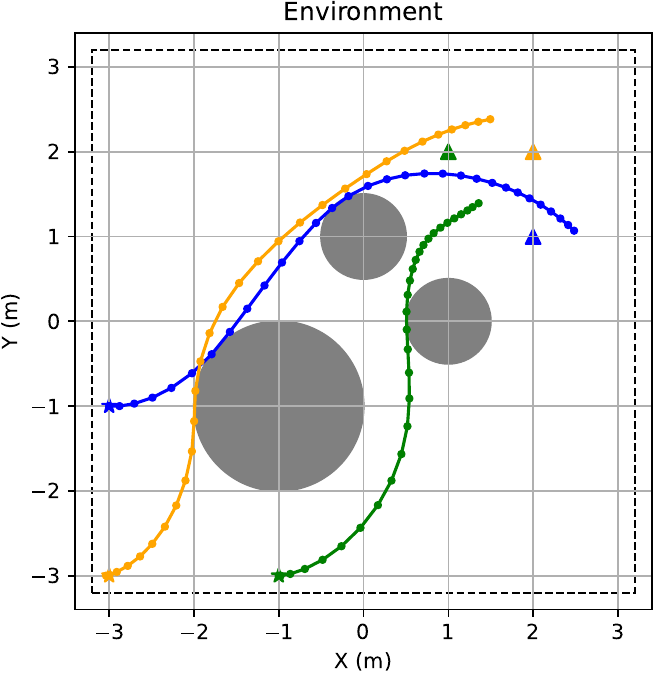}
    \includegraphics[width=0.49\linewidth]{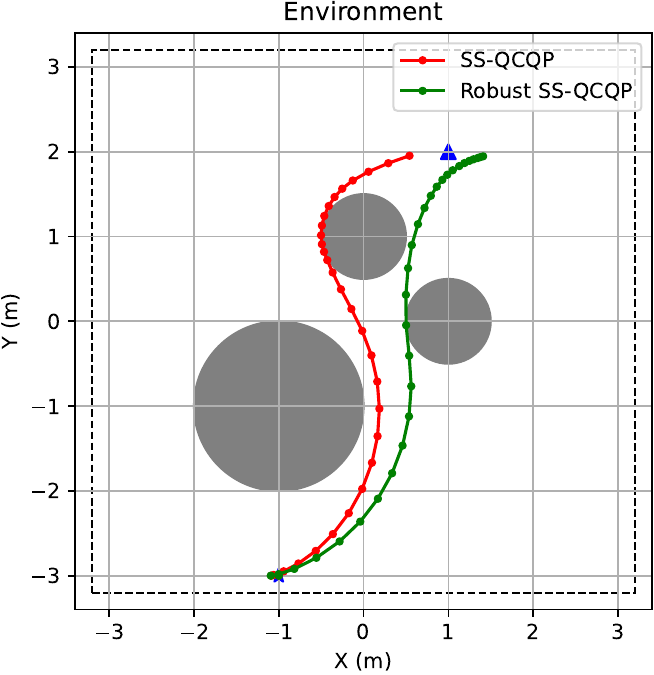}

    \includegraphics[width=0.49\linewidth]{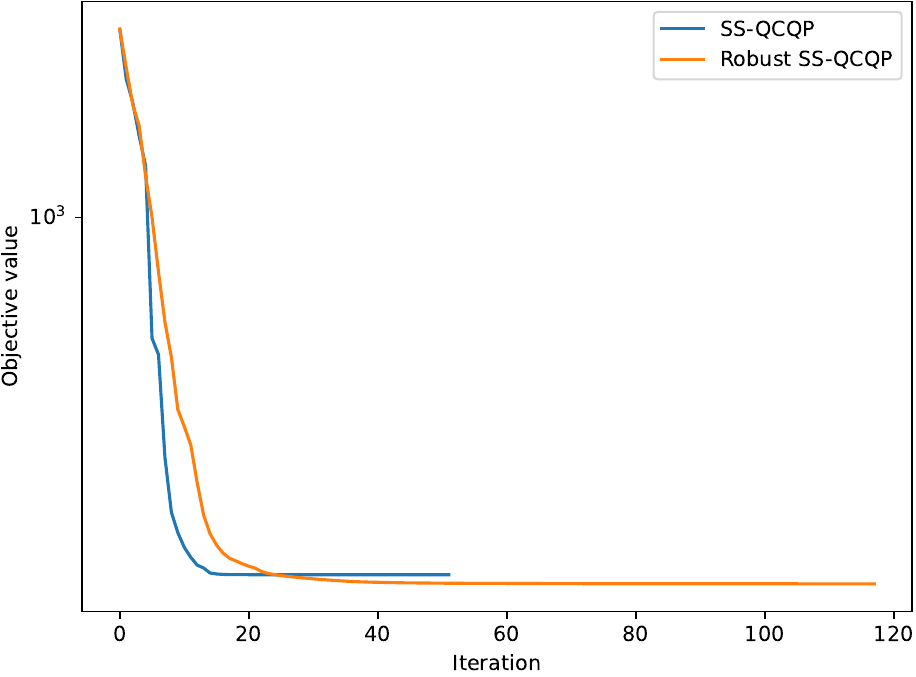}
    \includegraphics[width=0.49\linewidth]{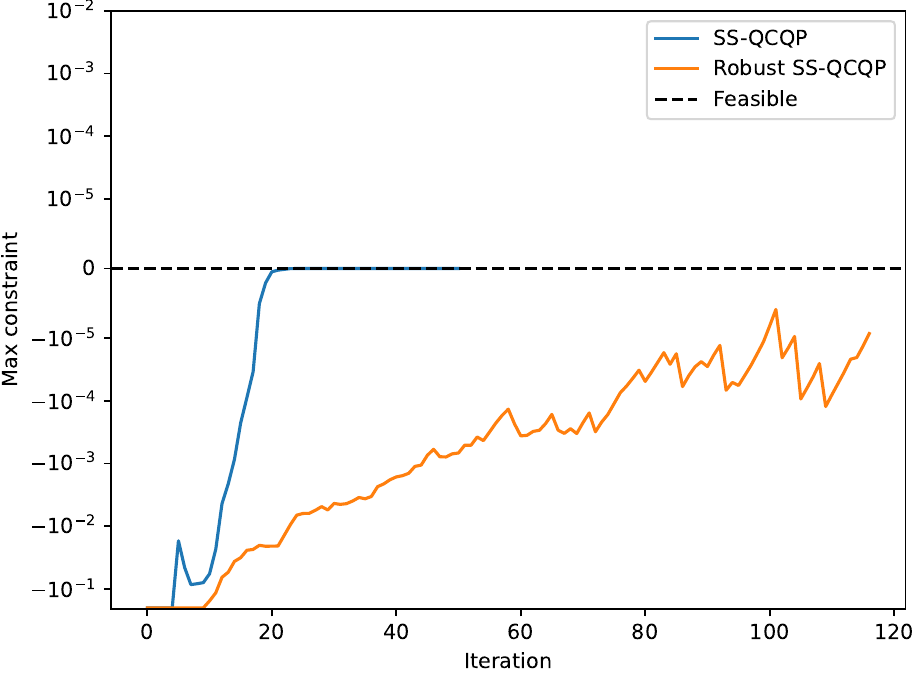}
    \caption{The top-left figure shows the navigation performance of our method with $n_v = 3$. The top-right figure compares our method with SS-QCQP under a noisy setup. The bottom plots compare the objective value and maximum constraint value across iterations.}
    \label{fig:exp}
\end{figure}

\paragraph{Multi-agent scenario}
We first consider a three-vehicle navigation task (top-left of \Cref{fig:exp}). The initial states are $X_1^s \! = \! [-3,-1,0]^\top,
X_2^s \! = \! [-3,-3,0]^\top,
X_3^s \! = \! [-1,-3,0]^\top$
and the goal states are $X_1^d \! = \! [2,1,0]^\top,
X_2^d \! = \! [2,2,0]^\top,
X_3^d \! = \! [1,2,0]^\top$.
Dotted curves in the figure represent vehicle trajectories.
To model gradient uncertainty,  we perturb the gradient using noise $d_i$ whose direction is sampled uniformly from the unit sphere and whose magnitude is uniformly sampled from $[0,\epsilon]$. Due to the worst-case treatment of gradient perturbations in \eqref{eq:robust_qcqp}, the proposed method maintains feasibility throughout the optimization process despite noisy gradients. However, the final state does not exactly reach the goal due to the noise in the objective gradient.

\paragraph{Single-agent scenario}
We next consider a single-vehicle task (top-right of \Cref{fig:exp}) with $X_1^s = [-1,-3,0]^\top, X_1^d = [1,2,0]^\top$.
Here, the gradient noise has a fixed all-ones direction with magnitude uniformly sampled from $[0,\epsilon]$. 
%
We compare SS-QCQP from \cite{wang2025anytime} with the proposed method. Since SS-QCQP does not account for gradient noise, the perturbations bias the search direction (e.g., inducing a systematic left turn). Moreover, its line search may fail to terminate near the boundary of feasibility, leading to premature failure of the algorithm. In contrast, our proposed method maintains feasibility throughout the optimization process and proceeds to convergence.
The evolution of the objective value and maximum constraint value is shown in the bottom row of \Cref{fig:exp}.

\section{Conclusion}
We developed an anytime-feasible first-order method for constrained optimization under norm-bounded gradient uncertainty. The method computes robust search directions by solving an SOCP and uses a safeguarded backtracking line search to preserve strict feasibility and ensure sufficient objective decrease. We established a uniform positive lower bound on the accepted step sizes, an $O(1/K)$ bound on the average squared search-direction norm, and convergence of the search directions to zero. Numerical experiments on vehicle navigation illustrate the method's ability to maintain feasibility despite gradient perturbations. The guarantees rely on known gradient-error bounds, exact function evaluations, and a strictly feasible initialization. Extending the framework to uncertain function values and unbounded gradient noise is a direction for future work.


\section{Appendix}

\subsection{KKT Conditions of the Original Problem \eqref{eq:optimization_problem}}
\label{app:kkt_original}

Let $x^\star$ be a local minimizer of~(1) at which MFCQ
holds. Then there exist multipliers
$\lambda^\star\in\mathbb{R}_+^m$ satisfying
\begin{subequations}\label{eq:kkt_original}
\begin{align} \label{eq:kkt_original_stationarity}
    \nabla f_0(x^\star)
    +\sum_{i=1}^m\lambda_i^\star\nabla f_i(x^\star)
    =0,  \\
    f_i(x^\star)\le0, \quad  \lambda_i^\star\ge0,\quad  \lambda_i^\star f_i(x^\star)=0,
    &&i\in[m],
\end{align}

\end{subequations}
A feasible point satisfying these conditions is called
a KKT point of~\eqref{eq:optimization_problem}.


\subsection{KKT conditions of \eqref{eq:robust_qcqp}}
Fix a strictly feasible point $x$, and let $u(x)$ be the unique minimizer of \eqref{eq:robust_qcqp}. By Lemma~\ref{lem:strict_feas}, the subproblem satisfies Slater's condition.
Since it is convex, its KKT conditions are necessary
and sufficient for optimality. Thus, there exist multipliers $\lambda_i(x) \ge 0$, $i \in [m]$, such that
\begin{align}\label{eq:kkt_socp_full}
&0 \!\in \! u(x) \!+\! \magenta{\widehat{ \nabla f}}_0(x) \!+\! \epsilon_0 \partial \|u(x)\| \!+ \!\sum_{i=1}^m \lambda_i(x) \partial_u h_i(u(x)), \notag \\
&h_i(u(x)) \le 0, \quad 
\lambda_i(x) h_i(u(x)) = 0 \quad i \in [m],
\end{align}
\magenta{where $h_i(u) = \widehat{\nabla f}_i(x)^\top u \! + \! \epsilon_i \|u\| \! + \! \alpha f_i(x) \! + \! w_i \|u\|^2$}.
Here, $\partial\|u\|$ denotes the subdifferential
of the Euclidean norm at $u$ \magenta{and $\partial h_i(u) = \widehat{\nabla f}_i(x) + \epsilon_i \partial \|u\| +2w_i u$}.
Since $\|u\|$ is not differentiable at $u = 0$, the stationarity condition must be characterized separately for the cases $u(x) = 0$ and $u(x) \neq 0$. If $ u(x) \neq 0$, then the stationarity condition becomes
\begin{align}\label{eq:kkt_socp_unonzero}
    u(x) &+ \magenta{\widehat{ \nabla f}}_0(x)
    + \epsilon_0 \frac{u(x)}{\|u(x)\|}  \\
    &+ \sum_{i=1}^m \lambda_i(x) \left( \widehat{\nabla f}_i(x) 
    + \epsilon_i \frac{u(x)}{\|u(x)\|} 
    + 2 w_i u(x) \right) = 0.\notag
\end{align}
If $ u(x) = 0$, there exist vectors $v_0,\ldots,v_m$
with $\|v_i\|\le1$ such that
\begin{align}\label{eq:kkt_socp_zero}
\magenta{\widehat{ \nabla f}}_0(x) + \epsilon_0 v_0 + \sum_{i=1}^m \lambda_i(x) (\widehat{\nabla f}_i(x) + \epsilon_iv_i) = 0.
\end{align}

\subsection{Proof of Theorem \ref{thm:descent}} \label{proof:descent}
\begin{proof}
If \(u(x)=0\), the claim is immediate. Consider now the case \(u(x)\neq 0\). Since \(x\) is strictly feasible for \eqref{eq:optimization_problem}, Lemma~\ref{lem:strict_feas} implies that the robust subproblem \eqref{eq:robust_qcqp} satisfies Slater's condition. Moreover, \eqref{eq:robust_qcqp} is convex. Hence, the KKT conditions are necessary and sufficient for optimality.

Let $\lambda_i(x)$ be the associated KKT multipliers. From the stationarity condition in \eqref{eq:kkt_socp_unonzero}, we have
\begin{align}
\magenta{\widehat{ \nabla f}}_0(x) \!
= &\! -u(x) \! - \! \epsilon_0 \frac{u(x)}{\|u(x)\|}
\! \notag \\
 & - \! \sum_{i=1}^m \lambda_i(x) \left(  \widehat{\nabla f}_i(x)  \! + \! \epsilon_i \frac{u(x)}{\|u(x)\|}  \! + \!  2 w_i u(x) \right). \notag
\end{align}
Taking the inner product with $u(x)$ yields
\begin{align}
    &\magenta{\widehat{ \nabla f}}_0(x)^\top u(x) = -\|u(x)\|^2 \! - \! \epsilon_0 \|u(x)\| \notag\\
        & - \! \sum_{i=1}^m \lambda_i(x) \left(
        \widehat{\nabla f}_i(x)^\top u(x)
        + \epsilon_i \|u(x)\|
        + 2 w_i \|u(x)\|^2
        \right) \notag \\
    &= -\|u(x)\|^2 \! - \! \epsilon_0 \|u(x)\| 
    \! + \! \sum_{i=1}^m \lambda_i(x) (\alpha f_i(x) \! - \! w_i \|u(x)\|^2),\notag
\end{align}
where we used complementary slackness \eqref{eq:kkt_socp_full}.
Since $f_i(x) \le 0$ and $\lambda_i \ge 0$, we have
\begin{align}\label{eq:descent_tilde}
\magenta{\widehat{ \nabla f}}_0(x)^\top u(x) + \epsilon_0 \|u(x)\| \le -\|u(x)\|^2.
\end{align}
Finally, since $\|d_i\| \leq \epsilon_i$, 
\begin{align}
    \nabla f_0(x)^\top u(x) 
    &\leq  \sup_{\|d_0\|\leq \epsilon_0} (\magenta{\widehat{ \nabla f}}_0(x) + d_0)^\top u(x) \notag\\
    &\leq \magenta{\widehat{ \nabla f}}_0(x)^\top u(x) + \epsilon_0 \|u(x)\|, \notag
\end{align}
which coupled with \eqref{eq:descent_tilde}, concludes the proof.
\end{proof}

\subsection{Proof of \Cref{thm:e-kkt}}\label{proof:e-kkt}
\begin{proof}
Since \(u(x)=0\), the SOCP constraints reduce to
\(
\alpha f_i(x)\le 0,\quad \forall i\in[m].
\)
Because \(x\) is strictly feasible, we have
\(
\alpha f_i(x)<0,\quad \forall i\in[m].
\)
Hence all SOCP constraints are strictly inactive at \(u(x)=0\). By complementary slackness of \eqref{eq:robust_qcqp}, every associated KKT multiplier must therefore vanish:
\(
\lambda_i(x)=0,\  \forall i\in[m]
\).
\magenta{
Using the uncertainty model \eqref{eq:noisy_grad} and that $\partial \|u\|\big|_{u=0}=\{q\in\mathbb R^n:\|q\|\le 1\}$, the stationarity condition of \eqref{eq:robust_qcqp} reduce to 
\begin{align*}
    \nabla f_0(x) \!+\! d_0 \!+\! \epsilon_0 q \!+\! \sum_{i=1}^m \lambda_i(x) 
    \big[ 
        \nabla f_i(x) \!+\! d_i \!+\! \epsilon_i q
    \big] \!=\! 0,
\end{align*}
where $\|q\| \leq 1, \|d_i\| \leq \epsilon_i$. Since $\lambda_i(x)=0 ~\forall i \in [m]$, we have
\begin{align*}
    \left\|\nabla f_0(x)+\sum_{i=1}^m \lambda_i(x)\nabla f_i(x)\right\| = \|d_0 + \epsilon_0 q\| \leq 2\epsilon_0.
\end{align*}
Finally, since \(x\in\mathcal F\), primal feasibility holds. Moreover, because \(\lambda_i(x)=0\),
\(
\lambda_i(x)f_i(x)=0
\) for all \(i\). Hence \(x\) is a \(2\epsilon_0\)-KKT point of \eqref{eq:optimization_problem} with dual variable $\lambda_i = \lambda_i(x)$.
}
%
\end{proof}


\subsection{Proof of Proposition \ref{prop:stepsize}}\label{proof:stepsize}
\begin{proof}
We show that both conditions \eqref{eq:armijo}--\eqref{eq:feas_ls} hold for all sufficiently small $t > 0$, and then extract an explicit bound. Following \Cref{lem:strict_feas}, since $x^k$ is strictly feasible for \eqref{eq:optimization_problem}, KKT conditions hold for \eqref{eq:robust_qcqp}. 

\smallskip
\noindent
\textit{Descent condition:}
By Lipschitz continuity of $\nabla f_0$, we have
\begin{align}
    f_0(x^k +& t u(x^k))  \notag\\
    \le& f_0(x^k) \! + \! t \nabla f_0(x^k)^\top u(x^k) \! + \!  \frac{L_{0}}{2} t^2 \|u(x^k)\|^2 \notag \\
    \le& f_0(x^k) \! - \! (t - \frac{t^2L_0}{2})\|u(x^k)\|^2,\notag
\end{align}
where we used \Cref{thm:descent}.
Thus, \eqref{eq:armijo} holds if
\begin{align}
     & f_0 (x^k) - (t - \frac{t^2L_0}{2})\|u(x^k)\|^2  
    \le f_0 (x^k) -\gamma_1 t \|u(x^k)\|^2, \notag
\end{align}
which simplifies to $t \le \frac{2(1-\gamma_1)}{L_{0}}.$

\smallskip
\noindent
\textit{Feasibility condition:}
By Lipschitz continuity of $\nabla f_i$, 
\begin{align}
f_i&(x^k + t u(x^k)) \notag \\
\le& f_i(x^k) + t \nabla f_i(x^k)^\top u(x^k) + \frac{L_{i}}{2} t^2 \|u(x^k)\|^2 \notag \\
\le& f_i(x^k) \! - \! t\alpha f_i(x^k) - t w_i\|u(x^k)\|^2 + \frac{L_{i}}{2} t^2 \|u(x^k)\|^2 \notag \\
=& (1- t\alpha) f_i(x^k) + \left(\frac{L_{i}}{2} t^2 - t w_i\right) \|u(x^k)\|^2, \notag
\end{align}
where we used the feasibility of $u(x^k)$, i.e.,
\begin{align}
    \nabla f_i(x^k)^\top u(x^k)
     &\le \! \nabla \tilde{f}_i(x^k)^\top u(x^k) \! +  \! \epsilon_i \|u(x^k)\|  \notag\\
     &\leq \! -\alpha f_i(x^k) \! - \! w_i\|u(x^k)\|^2. \notag
\end{align}
Thus, \eqref{eq:feas_ls} holds if
\begin{align}
(1 \!-\! \alpha t) f_i(x^k) + \left(\frac{L_{i}}{2} t^2 \!-\! t w_i\right) \|u(x^k)\|^2
\le (1 \!-\! \gamma_2) f_i(x^k). \notag
\end{align}
Since $f_i(x^k) < 0$, it suffices that both 
$t \le \frac{\gamma_2}{\alpha}$ and $\frac{L_{i}}{2} t^2 - t w_i \le 0,$
which holds whenever $t \le \min\left\{\frac{\gamma_2}{\alpha}, \frac{2w_i}{L_{i}}\right\}.$
Thus, both conditions are satisfied for
\[
t \le \min\left\{ \frac{2(1-\gamma_1)}{L_0}, \min_i\left(\frac{2w_i}{L_{i}}\right), \frac{\gamma_2}{\alpha} \right\}.
\]
\magenta{Therefore, the resulting step size satisfies $t^k \geq t_{\min}$}.
%
\end{proof}

\subsection{Proof of Theorem \ref{thm:convergence}}
\begin{proof}
We first establish strict feasibility by induction.
The initial point $x^0$ is strictly feasible by assumption.
If $x^k$ is strictly feasible, Proposition~1 guarantees
that the backtracking line search terminates with an
accepted step size $t_k\ge t_{\min}>0$. The feasibility
condition~(10) then gives
\[
    f_i(x^{k+1})
    \le (1-\gamma_2)f_i(x^k)<0,
    \qquad i\in[m],
\]
because $0<\gamma_2<1$.
Thus, every iterate remains strictly feasible.
Applying this inequality recursively proves
\eqref{eq:strict_feasibility_bound}.

Next, the sufficient decrease condition~(9) and the
uniform step-size bound imply
\begin{align}
    f_0(x^{k+1}) \! \le \! f_0(x^k) \! -\! \gamma_1t_k\|u(x^k)\|^2 \! \le \! f_0(x^k) \!- \! \gamma_1t_{\min}\|u(x^k)\|^2 \notag.
\end{align}
Summing from $k=0$ to $K-1$ yields
\[
    \gamma_1t_{\min}
    \sum_{k=0}^{K-1}\|u(x^k)\|^2
    \le f_0(x^0)-f_0(x^K).
\]
Since $x^K$ is feasible, $f_0(x^K)\ge f^\star$.
Dividing by $\gamma_1t_{\min}K$ therefore establishes
\eqref{eq:ergodic_direction_bound}. The same inequality shows that 
\[
    \sum_{k=0}^{K-1}\|u(x^k)\|^2
    \le
    \frac{f_0(x^0)-f^\star}{\gamma_1t_{\min}}
    \qquad \forall K\ge1.
\]
These partial sums are nondecreasing and uniformly bounded.
As $K\!\to\!\infty$, we conclude that the series converges,
which implies $\|u(x^k)\|^2\!\to\!0$.
\end{proof}

\bibliographystyle{ieeetr}
\bibliography{refs}

\end{document}